\newtheorem{theorem}{Theorem}
\newtheorem{corollary}{Corollary}
\newtheorem{lemma}{Lemma}
\newtheorem{remark}{Remark}[section]
\numberwithin{equation}{section}
\newcommand{\todaye}{\the\year/\the\month/\the\day}
\newcommand{\finbox}{\hspace*{\fill}$\rule{0.2cm}{0.2cm}$}
\newcommand{\RR}{{\mathbb{R}}}
\newcommand{\ZZ}{{\mathbb{Z}}}
\newcommand{\dom}{{\rm dom\,}}
\numberwithin{equation}{section}
\title{A Stronger Multiple Exchange Property for \\
M$\sp{\natural}$-concave Functions 
}%title
\author{Kazuo Murota
\\
School of Business Administration, Tokyo Metropolitan University,
\\
Tokyo 192-0397, Japan; 
murota@tmu.ac.jp
}%author
\date{June 28, 2017}
\begin{document}

\maketitle

\begin{abstract}
The multiple exchange property for matroid bases
has recently been generalized for valuated matroids and
M$^{\natural}$-concave set functions.
This paper establishes a stronger form of 
this multiple exchange property 
that imposes a cardinality condition 
on the exchangeable subset.
The stronger form immediately implies
the defining exchange property of M$^{\natural}$-concave set functions,
which was not the case with the recently established 
multiple exchange property without the cardinality condition.
\end{abstract}

{\bf Keywords}: 
Discrete convex analysis,
Matroid,
Exchange property,
Combinatorial optimization

%%{52B40, 52A41}
%% 52B40 = matroid
%% 52A41=Convex functions and convex programs

%% 90C10=Integer programming
%% 90C27 = combinatorial optimization
%% 90C25 = convex programming

%%%%%%%%%%%%%%% SSSSS %%%%%%%%%%%%%%%%%%%%%
\section{Introduction}
\label{SCintro}

The concept of M$^{\natural}$-concave functions in discrete convex analysis 
\cite{Fuj05book,Mdcasiam,Mbonn09,MS99gp} 
 has found applications 
in mathematical economics and game theory;
see \cite[Chapter 11]{Mdcasiam}, \cite{Tam04,Tam09book},
and recent survey papers \cite{Mdcaeco16,ST15jorsj}.
M$^{\natural}$-concavity of a set function $f$
is defined in terms of the exchange property that,
for any subsets $X, Y$ and any element $i \in X \setminus Y$,
at least one of  (i) and (ii) holds, where
(i) $f( X) + f( Y ) \leq f( X \setminus  \{ i \} ) + f( Y\cup \{ i \} )$
or (ii) there exists some $j \in Y \setminus X$ such that
$f( X) + f( Y ) \leq 
 f( (X \setminus  \{ i \}) \cup \{ j \} ) + f( (Y \cup \{ i \}) \setminus \{ j \} )$.

It has been shown recently in \cite{Mmultexc17} that 
an M$^{\natural}$-concave set function $f$
has the multiple exchange property that,
for any subsets $X, Y$ and a subset $I \subseteq X \setminus Y$,
there exists $J \subseteq Y \setminus X$ such that
$f( X) + f( Y )   \leq  f((X \setminus I) \cup J) +f((Y \setminus J) \cup I)$.
This result has an economic significance that
the gross substitutes (GS) condition of Kelso and Crawford \cite{KC82}
is in fact equivalent to 
the strong no complementarities (SNC) condition of Gul and Stacchetti \cite{GS99}.
In the special case of M-concave functions,
this multiple exchange property 
gives a quantitative generalization of a classical results in matroid theory
(\cite{Kun86b}, \cite[Section 39.9a]{Sch03})
that the basis family of a matroid enjoys the multiple exchange property,
which says that, 
for two bases $X$ and $Y$ in a matroid and a subset $I \subseteq X \setminus Y$,
there exists a subset $J \subseteq Y \setminus X$ such that
$(X \setminus I) \cup J$ and $(Y \setminus J) \cup I$ are both bases.

The objective of this paper is to 
establish a stronger form of 
the multiple exchange property 
that imposes a cardinality condition $|J| \leq |I|$ 
on the exchangeable subset $J$.
The stronger form immediately implies
the defining exchange property of M$^{\natural}$-concave set functions,
which is not the case with the multiple exchange property
of \cite{Mmultexc17} without the cardinality condition.
The results are described in Section \ref{SCresult} 
and two alternative proofs are
given in Sections \ref{SCproof1direct} and \ref{SCproof2reduce}.

%%%%%%%%%%%%%%%%%%%%%%
\section{Results}
\label{SCresult}

Let $N$ be a finite set, say,
$N = \{ 1,2,\ldots, n \}$.
For a function
$f: 2^{N} \to \RR \cup \{ -\infty \}$,
$\dom f$ denotes the effective domain of $f$, i.e.,
$\dom f = \{ X \mid f(X) > -\infty \}$.

A function
$f: 2^{N} \to \RR \cup \{ -\infty \}$ 
with $\dom f \not= \emptyset$
is called {\em M$^{\natural}$-concave}
\cite{Mdcasiam,MS99gp}
if,
for any $X, Y \in \dom f$ 
and $i \in X \setminus Y$,
we have  (i)
$X - i \in \dom f$, $ Y + i \in \dom f$, and
\begin{equation}  \label{mconcav1}
f( X) + f( Y ) \leq f( X - i ) + f( Y + i ),
\end{equation}
or (ii) there exists some $j \in Y \setminus X$ such that
$X - i +j \in \dom f$, $ Y + i -j  \in \dom f$, and
\begin{equation}  \label{mconcav2}
f( X) + f( Y ) \leq 
 f( X - i + j) + f( Y + i -j).
\end{equation}
Here we use short-hand notations
$X - i = X \setminus  \{ i \}$,
$Y + i = Y \cup \{ i \}$,
$X - i + j =(X \setminus  \{ i \}) \cup \{ j \}$,
and
$Y + i - j =(Y \cup \{ i \}) \setminus \{ j \}$.
This property
is referred to as the {\em exchange property}.
The exchange property can be expressed more compactly as:
\begin{quote}
\mbox{\bf (M$^{\natural}$-EXC)} \
For any $X, Y \subseteq N$ 
and $i \in X \setminus Y$, we have
\begin{align}
f( X) + f( Y )   &\leq 
   \max\left[ f( X - i ) + f( Y + i ),  
 \right.
\notag \\
    &    \left.        \phantom{\max_{j \in Y \setminus X}} 
 \max_{j \in Y \setminus X}  \{ f( X - i + j) + f( Y + i -j) \}\right] ,
\label{mnatconcavexc2}
\end{align}
\end{quote}
where
$(-\infty) + a = a + (-\infty) = (-\infty) + (-\infty)  = -\infty$ for $a \in \RR$,
$-\infty \leq -\infty$, and
a maximum taken over an empty set
is defined to be $-\infty$.

The {\em multiple exchange property} means 
the following more general form of (M$^{\natural}$-EXC):
\begin{quote}
\mbox{\bf (M$^{\natural}$-EXC$_{\rm\bf m}$)} \ 
For any $X, Y \subseteq N$ and $I \subseteq X \setminus Y$, we have
\begin{align}
f( X) + f( Y )   \leq 
 \max_{J \subseteq Y \setminus X} 
 \{  f((X \setminus I) \cup J) +f((Y \setminus J) \cup I)  \} .
\label{mnatconcavexcmult}
\end{align}
\end{quote}
Here we may specify any subset $I$,
rather than a single element $i$,
in $X \setminus Y$,
and we can always find an exchangeable subset $J \subseteq Y \setminus X$.
It has recently been shown \cite{Mmultexc17} that 
(M$^{\natural}$-EXC) and (M$^{\natural}$-EXC$_{\rm m}$)
are equivalent. 

\begin{theorem}[\cite{Mmultexc17}]  \label{THmnatiffmultexch}
A function
$f: 2^{N} \to \RR \cup \{ -\infty \}$ 
is M$^{\natural}$-concave 
if and only if it has the multiple exchange property 
{\rm (M$^{\natural}$-EXC$_{\rm m}$)}.
\end{theorem}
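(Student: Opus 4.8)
The statement is an equivalence, so I would prove the two implications separately; the reverse implication is short, while the forward one carries the real content. Throughout, I would absorb the domain constraints of M$^{\natural}$-concavity into the $-\infty$ convention, so that the inequality is trivial unless $X, Y \in \dom f$, and it suffices to argue with finite values.

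For the reverse implication, assume \mbox{(M$^{\natural}$-EXC$_{\rm m}$)} and specialize it to a singleton $I = \{ i \}$ with $i \in X \setminus Y$. This gives $f(X) + f(Y) \leq \max_{J \subseteq Y \setminus X} \{ f((X - i) \cup J) + f((Y \setminus J) \cup \{ i \}) \}$. The summand for $J = \emptyset$ is $f(X - i) + f(Y + i)$ and the summand for $J = \{ j \}$ is $f(X - i + j) + f(Y + i - j)$, i.e. exactly the two alternatives of \mbox{(M$^{\natural}$-EXC)}; hence if a maximizer can be taken with $|J| \leq 1$ we are done. The one delicate point is that the maximizer $J^{*}$ might have $|J^{*}| \geq 2$, in which case \mbox{(M$^{\natural}$-EXC)} is not yet recovered. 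I would handle this by an extremal argument, taking a maximizer of least cardinality and, assuming $|J^{*}| \geq 2$, applying \mbox{(M$^{\natural}$-EXC$_{\rm m}$)} once more to the transformed pair $((X - i) \cup J^{*}, (Y \setminus J^{*}) \cup \{ i \})$ in order to exhibit a maximizer transferring strictly fewer elements, contradicting minimality. I expect this cardinality reduction to be the only nonobvious step here, and it is precisely the step that the cardinality-constrained strengthening of this paper renders automatic.

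For the forward implication I would assume \mbox{(M$^{\natural}$-EXC)} and establish \mbox{(M$^{\natural}$-EXC$_{\rm m}$)} by induction on $|I|$. The base case $|I| = 1$ is immediate, since the right-hand side of \mbox{(M$^{\natural}$-EXC)} is a maximum over a subfamily of the one in \mbox{(M$^{\natural}$-EXC$_{\rm m}$)}. For the inductive step, fix $i \in I$, put $I' = I \setminus \{ i \}$, and apply the induction hypothesis to $I'$ to obtain $J' \subseteq Y \setminus X$ and an intermediate pair $X_{1} = (X \setminus I') \cup J'$, $Y_{1} = (Y \setminus J') \cup I'$ with $f(X) + f(Y) \leq f(X_{1}) + f(Y_{1})$ and $i \in X_{1} \setminus Y_{1}$. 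Applying \mbox{(M$^{\natural}$-EXC)} to $(X_{1}, Y_{1})$ and the element $i$, two of the resulting cases close the induction directly: if $i$ leaves alone we reach the target with $J = J'$, and if $i$ is exchanged for some $j \in Y \setminus X$ with $j \notin J'$ we reach it with $J = J' \cup \{ j \}$.

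The main obstacle is the remaining case, in which \mbox{(M$^{\natural}$-EXC)} exchanges $i$ for an element $j \in I'$ that has already been transferred to the $Y_{1}$-side: this merely interchanges the roles of $i$ and $j$ inside $I$ and does not reduce the number of elements of $I$ still sitting on the $X$-side, so a naive induction can cycle. I would resolve this by replacing the one-step induction with a global extremal choice: among all pairs of the form $((X \setminus I) \cup J' \cup S, (Y \setminus J') \cup (I \setminus S))$ with $S \subseteq I$, $J' \subseteq Y \setminus X$, and value at least $f(X) + f(Y)$, select one with $|S|$ minimal. If $S \neq \emptyset$, applying the single exchange to an element of $S$ either strictly decreases $|S|$, or produces a within-$I$ swap whose value bound contradicts the extremality of the chosen pair. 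Making this termination argument fully rigorous, i.e. ruling out the persistence of such swaps, is the technical heart of the proof.
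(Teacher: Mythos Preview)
This theorem is quoted from \cite{Mmultexc17} rather than proved in the present paper, but Section~\ref{SCproof1direct} here is explicitly a refinement of the argument given there, so the intended method can be read off.

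There is a genuine gap in your forward direction. You correctly isolate the obstacle in the induction on $|I|$: after moving $I'$ across and applying (M$^{\natural}$-EXC) at $i$, the exchange partner $j$ may lie in $I' \subseteq Y_1 \setminus X_1$, which merely permutes elements within $I$ and makes no net progress. Your proposed extremal fix --- pick an admissible pair with $|S|$ minimal and claim that a within-$I$ swap contradicts minimality --- does not close it as stated: such a swap produces another admissible pair with the \emph{same} value of $|S|$, so minimality of $|S|$ is not violated, and nothing rules out cycling among equal-$|S|$ configurations. You yourself flag that ``making this termination argument fully rigorous \ldots\ is the technical heart of the proof,'' and you have not supplied it. The proof in \cite{Mmultexc17}, and its refinement in Section~\ref{SCproof1direct}, abandons elementwise induction altogether and passes through the Fenchel-type duality of discrete convex analysis (Theorem~\ref{THfenchelmnatsetfn}): one introduces two auxiliary M$^{\natural}$-concave functions $f_1, f_2$ on $2^{Y\setminus X}$ built from restrictions of $f$, rewrites the target inequality as a lower bound on $\inf_{q}\{g_1(q)+g_2(-q)\}$ for their conjugates, and then controls this infimum using the submodularity on $\RR^{N}$ (L$^{\natural}$-convexity) of the full conjugate $g(p)=\max_Z\{f(Z)-p(Z)\}$. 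That duality/submodularity machinery is the missing idea; the elementary exchange-and-extremal scheme you outline is not known to succeed here.
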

The content of this theorem lies in the implication 
``(M$^{\natural}$-EXC)
$\Rightarrow$ 
(M$^{\natural}$-EXC$_{\rm m}$).''
It is emphasized, however, that 
``(M$^{\natural}$-EXC$_{\rm m}$)
$\Rightarrow$ 
(M$^{\natural}$-EXC)''
is not obvious and a separate proof is needed also for this direction, 
 though the proof 
\cite[Section 5.2]{Mmultexc17}
is straightforward.

In this paper we are interested in a stronger form of
the multiple exchange property,
in which an additional condition 
$|J| \leq |I|$ is imposed on the exchangeable subset $J$:

\begin{quote}
\mbox{\bf (M$^{\natural}$-EXC$_{\rm\bf ms}$)}
For any $X, Y \subseteq N$ and $I \subseteq X \setminus Y$,
we have
\begin{align}
f( X) + f( Y )   \leq 
 \max_{J \subseteq Y \setminus X, \  |J| \leq |I|} 
 \{  f((X \setminus I) \cup J) +f((Y \setminus J) \cup I)  \}.
\label{mnatconcavexcmult-str}
\end{align}
\end{quote}

The following theorem, the main result of this paper,
states that (M$^{\natural}$-EXC) implies
has the stronger multiple exchange property 
(M$^{\natural}$-EXC$_{\rm ms}$)
with cardinality requirement.

\begin{theorem} \label{THmultexchmnat-str}
Every M$^{\natural}$-concave function
$f: 2^{N} \to \RR \cup \{ -\infty \}$ 
has the stronger multiple exchange property 
{\rm (M$^{\natural}$-EXC$_{\rm ms}$)}
with cardinality requirement.
\end{theorem}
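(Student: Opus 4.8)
The plan is to establish (M$^{\natural}$-EXC$_{\rm ms}$) by induction on $|I|$, driving everything by the one--element property (M$^{\natural}$-EXC). We may assume $X,Y\in\dom f$, since otherwise the left-hand side is $-\infty$ and there is nothing to prove. For $|I|\le 1$ the claim is immediate: $I=\emptyset$ is handled by $J=\emptyset$, and for $I=\{i\}$ the property (M$^{\natural}$-EXC) applied to $(X,Y)$ at $i$ returns either $J=\emptyset$ or a singleton $J=\{j\}$, in both cases with $|J|\le|I|$. For the inductive step I fix an element $i\in I$, set $I'=I\setminus\{i\}$, and first invoke the induction hypothesis on $(X,Y,I')$ to obtain $J'\subseteq Y\setminus X$ with $|J'|\le|I'|=|I|-1$ and $f(X)+f(Y)\le f(P)+f(Q)$, where $P=(X\setminus I')\cup J'$ and $Q=(Y\setminus J')\cup I'$. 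Since $i\in P\setminus Q$, I then apply (M$^{\natural}$-EXC) to $(P,Q)$ at $i$. A short computation, using $J'\subseteq Y\setminus X$, shows $P\setminus\{i\}=(X\setminus I)\cup J'$ and $Q\cup\{i\}=(Y\setminus J')\cup I$, so the no--swap alternative yields $J=J'$ with $|J|\le|I|-1$; and if the swap alternative chooses $j\in(Q\setminus P)\cap(Y\setminus X)$, then $(P\setminus\{i\})\cup\{j\}=(X\setminus I)\cup(J'\cup\{j\})$ and $(Q\cup\{i\})\setminus\{j\}=(Y\setminus(J'\cup\{j\}))\cup I$, so $J=J'\cup\{j\}$ works with $|J|\le|I|$. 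In either case the cardinality budget is respected, which is the whole point of the strengthening.

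The difficulty is concentrated in the one remaining alternative. One computes $Q\setminus P=I'\sqcup((Y\setminus X)\setminus J')$, so the swap partner $j$ may instead lie in $I'$. Such a swap transfers $i$ to the $Q$--side while returning $j$ to the $P$--side: it leaves $J'$ unchanged and merely replaces the ``unmoved'' element $i\in I$ by another unmoved element $j\in I$. Cardinality does not increase, but it does not decrease either, so a naive recursion can cycle through the elements of $I$ without terminating. I expect this returning/cycling phenomenon to be the main obstacle, and it is exactly the place where the equal treatment of all elements of $I$ must be exploited. To control it I would keep $J'$ fixed and consider the finite family of configurations $P_h=(X\setminus(I\setminus\{h\}))\cup J'$, $Q_h=(Y\setminus J')\cup(I\setminus\{h\})$ indexed by the position $h\in I$ of the unmoved element. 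A returning swap sends $h\mapsto h'$ and, by the exchange inequality, never decreases $f(P_h)+f(Q_h)$; moreover the \emph{target} of the no--swap alternative, namely $f((X\setminus I)\cup J')+f((Y\setminus J')\cup I)$, does not depend on $h$. Passing to a hole position that maximizes $f(P_h)+f(Q_h)$, the no--swap or the genuinely new swap must become available (any further returning swap could only stay at the maximum), and this closes the induction once one breaks ties among equal--value maxima by a fixed linear ordering of $N$ to forbid repetition. Equivalently, one may run this as a minimal--counterexample argument: take a $J$ realizing the bound of Theorem~\ref{THmnatiffmultexch} with $|J|$ least, and show $|J|>|I|$ is untenable by the same three--way analysis.

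A second, more structural route reduces the statement to Theorem~\ref{THmnatiffmultexch} on a constant--cardinality function. Adjoin a set $D$ of $|N|$ fresh dummy elements and define $g(Z)=f(Z\cap N)$ for $Z\subseteq N\cup D$ with $|Z|=|N|$ (and $g(Z)=-\infty$ otherwise); then $\dom g$ is cardinality--homogeneous. Padding $X,Y$ up to size $|N|$ by dummies and applying the multiple exchange property to $g$ with the same $I$ produces an exchangeable $\hat J$; because every set in $\dom g$ has cardinality $|N|$, finiteness of the right-hand side forces $|\hat J|=|I|$, and deleting the dummy part gives $J=\hat J\cap N\subseteq Y\setminus X$ with $|J|\le|I|$ and the desired inequality for $f$. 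The crux of this route is the structural fact that the restriction of an M$^{\natural}$-concave function to a single cardinality level is M-concave, i.e.\ that $g$ is M$^{\natural}$-concave; this is precisely where the substitutability encoded in (M$^{\natural}$-EXC) (equivalently, equal cardinality forcing the swap alternative rather than the drop alternative) has to be used, and it carries essentially the same content as resolving the returning case in the direct proof.
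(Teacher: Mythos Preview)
Your second route---padding by dummies to land in the equi-cardinal (M-concave) setting and then invoking Theorem~\ref{THmnatiffmultexch}---is precisely the paper's second proof in Section~\ref{SCproof2reduce}; the paper uses $r-s$ dummies and level $r$ (with $r,s$ the extreme sizes in $\dom f$), but your $|N|$ dummies at level $|N|$ work the same way. The step you correctly flag as the crux, that the augmented function $g$ is M-concave, is carried out in the paper as Lemma~\ref{LMassocMfn}, and its verification requires the auxiliary exchange facts of Lemmas~\ref{LMmexcP2P3} and~\ref{LMmexcP1} (quoted from \cite{MS99gp}), so it is not a one-liner.

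Your first, inductive route has a real gap exactly where you locate it. With $J'$ fixed and $\phi(h)=f(P_{h})+f(Q_{h})$, the property (M$^{\natural}$-EXC) at $h$ reads
\[
\phi(h)\ \le\ \max\Bigl[\,\alpha,\ \max_{h'\in I\setminus\{h\}}\phi(h'),\ \max_{j}\beta_{j}\,\Bigr],
\]
where $\alpha$ is the no-swap value and $\beta_{j}$ are the new-swap values. If two or more $h$ tie at the maximum $M=\max_{h}\phi(h)$, this inequality collapses to $M\le M$ at every maximizer and yields no lower bound whatsoever on $\alpha$ or on any $\beta_{j}$. Tie-breaking by a linear order on $N$ cannot rescue this: (M$^{\natural}$-EXC) is an existence statement, and you are not allowed to \emph{choose} the exchange partner so as to avoid a returning swap. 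Your minimal-$|J|$ variant hits the same wall (a single exchange step can replace $J$ by another subset of the same size, or shrink $I$ and $J$ in tandem, neither of which forces $|J|\le|I|$). This is why the paper's direct argument in Section~\ref{SCproof1direct} abandons induction on $|I|$ altogether and instead passes through the Fenchel-type duality of Theorem~\ref{THfenchelmnatsetfn} together with a cross-submodularity inequality between two conjugate functions (Lemma~\ref{LMsbmtgg}).
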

\begin{proof}
Two alternative proofs are given in Sections \ref{SCproof1direct} and \ref{SCproof2reduce}. 
The first proof is a self-contained direct proof, 
being a refinement of the argument in \cite{Mmultexc17} for
 (the only-if part of) Theorem \ref{THmnatiffmultexch},
whereas the second makes use of (the only-if part of) Theorem \ref{THmnatiffmultexch}
through a transformation of an M$^{\natural}$-concave function 
to a valuated matroid.
\qed
\end{proof}

The stronger form (M$^{\natural}$-EXC$_{\rm ms}$)
immediately implies (M$^{\natural}$-EXC)
as a special case with $|I|=1$,
whereas (M$^{\natural}$-EXC$_{\rm ms}$)
obviously implies (M$^{\natural}$-EXC$_{\rm m}$).
Therefore, we obtain the equivalence 
of the three exchange properties
as a corollary of Theorems \ref{THmnatiffmultexch} and \ref{THmultexchmnat-str}.

\begin{corollary} \label{COmnatexch}
For a  function
$f: 2^{N} \to \RR \cup \{ -\infty \}$,
the three conditions 
{\rm (M$^{\natural}$-EXC)},
{\rm (M$^{\natural}$-EXC$_{\rm m}$)}, and
{\rm (M$^{\natural}$-EXC$_{\rm ms}$)}
are equivalent.
\end{corollary}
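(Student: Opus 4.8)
The plan is to prove the three-way equivalence through a short cycle of implications among (M$^{\natural}$-EXC), (M$^{\natural}$-EXC$_{\rm m}$), and (M$^{\natural}$-EXC$_{\rm ms}$), recalling that M$^{\natural}$-concavity is by definition exactly the compact condition (M$^{\natural}$-EXC). Two of the implications I need are immediate from the definitions and two are the theorems already in hand, so no new combinatorics is required.

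First I would dispatch the two definitional implications. For (M$^{\natural}$-EXC$_{\rm ms}$) $\Rightarrow$ (M$^{\natural}$-EXC$_{\rm m}$) I note that the maximand in \eqref{mnatconcavexcmult-str} and \eqref{mnatconcavexcmult} is the same function of $J$, while the feasible family $\{J\subseteq Y\setminus X\}$ of (M$^{\natural}$-EXC$_{\rm m}$) contains the cardinality-restricted family $\{J\subseteq Y\setminus X : |J|\le|I|\}$ of (M$^{\natural}$-EXC$_{\rm ms}$); since a maximum over a larger family is no smaller, the lower bound guaranteed by \eqref{mnatconcavexcmult-str} immediately yields \eqref{mnatconcavexcmult}. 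For (M$^{\natural}$-EXC$_{\rm ms}$) $\Rightarrow$ (M$^{\natural}$-EXC) I would specialize to a singleton $I=\{i\}$ with $i\in X\setminus Y$: the condition $|J|\le|I|=1$ lets $J$ range only over $\emptyset$ and the singletons $\{j\}$, $j\in Y\setminus X$, and the right-hand side of \eqref{mnatconcavexcmult-str} then collapses verbatim to the right-hand side of \eqref{mnatconcavexc2}.

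Next I would invoke the two substantive results. The implication (M$^{\natural}$-EXC) $\Rightarrow$ (M$^{\natural}$-EXC$_{\rm ms}$) is precisely Theorem \ref{THmultexchmnat-str}, and the reverse passage from the unrestricted multiple exchange back to the single exchange, (M$^{\natural}$-EXC$_{\rm m}$) $\Rightarrow$ (M$^{\natural}$-EXC), is the ``if'' direction of Theorem \ref{THmnatiffmultexch}. Chaining these with the definitional steps gives the cycle (M$^{\natural}$-EXC) $\Rightarrow$ (M$^{\natural}$-EXC$_{\rm ms}$) $\Rightarrow$ (M$^{\natural}$-EXC$_{\rm m}$) $\Rightarrow$ (M$^{\natural}$-EXC), which visits all three properties and therefore renders them mutually equivalent; alternatively one may close the loop through the direct shortcut (M$^{\natural}$-EXC$_{\rm ms}$) $\Rightarrow$ (M$^{\natural}$-EXC) noted above together with the equivalence (M$^{\natural}$-EXC) $\Leftrightarrow$ (M$^{\natural}$-EXC$_{\rm m}$) supplied by Theorem \ref{THmnatiffmultexch}.

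The corollary has no genuine obstacle of its own; all the real difficulty has been front-loaded into Theorem \ref{THmultexchmnat-str}, which I am entitled to assume. The only points that warrant attention are bookkeeping ones: keeping the inequality in the monotone-maximum argument pointed the correct way, and checking that the chosen implications actually form a cycle touching every one of the three conditions rather than a mere chain, so that equivalence---and not just a one-sided hierarchy---results. Following the paper's own caveat, I would flag that the single step (M$^{\natural}$-EXC$_{\rm m}$) $\Rightarrow$ (M$^{\natural}$-EXC) is the one link that cannot be read off from the definitions and must borrow the nontrivial content of Theorem \ref{THmnatiffmultexch}; every other link is either a definitional specialization or monotonicity of the maximum.
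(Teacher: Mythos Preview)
Your proposal is correct and mirrors the paper's own argument essentially verbatim: the paper observes that (M$^{\natural}$-EXC$_{\rm ms}$) $\Rightarrow$ (M$^{\natural}$-EXC) by taking $|I|=1$ and (M$^{\natural}$-EXC$_{\rm ms}$) $\Rightarrow$ (M$^{\natural}$-EXC$_{\rm m}$) trivially, and then closes the loop via Theorems~\ref{THmnatiffmultexch} and~\ref{THmultexchmnat-str}. Your additional bookkeeping remarks about the direction of the monotone-maximum inequality and the need for a genuine cycle are sound but not required.
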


\section{The first proof of Theorem~\ref{THmultexchmnat-str}}
\label{SCproof1direct}

In this section we give a self-contained direct proof of Theorem~\ref{THmultexchmnat-str}.
This is a refinement of the argument in \cite{Mmultexc17} for
 (the only-if part of) Theorem \ref{THmnatiffmultexch}.

The proof is based on the Fenchel-type duality theorem 
in discrete convex analysis
(\cite[Theorem 3.1]{Mvmfen98}, \cite[Theorem 8.21 (1)]{Mdcasiam}),
which is stated below in a form convenient for our use.

\begin{theorem}[Fenchel-type duality] \label{THfenchelmnatsetfn}
Let 
$f_{1}, f_{2}: 2^{N} \to \RR \cup \{ -\infty \}$
be M$^{\natural}$-concave functions,
and  
$g_{1}, g_{2}: \RR^{N} \to \RR$
be their (convex) conjugate functions defined by 
$g_{i}(q) =  
 \max_{J \subseteq N} \{  f_{i}(J) - \sum_{j \in J} q_{j} \}$
$(i=1,2)$ for $q \in \RR^{N}$.
Then%
\footnote{%%%%%%%%
The assumption $\dom g_{1} \cap \dom g_{2} \not= \emptyset$
in \cite[Theorem 8.21 (1)]{Mdcasiam}
is satisfied, since $\dom g_{1} = \dom g_{2} = \RR^{N}$.
} %%%%%%%%%%%%%%%%
\begin{align}
 \max_{J \subseteq N}  \{ f_{1}(J) + f_{2}(J) \}
 = \inf_{q \in \RR^{N}} \{ g_{1}(q) + g_{2}(-q) \},
\label{fencmaxmin0}
\end{align}
where the maximum on the left-hand side is
defined to be $-\infty$ 
if $\dom f_{1} \cap \dom f_{2} = \emptyset$.
If $f_{1}$ and $f_{2}$ are integer-valued, the vector $q$ can be restricted to integers. 
\end{theorem}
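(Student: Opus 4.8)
The plan is to split the asserted identity into the two inequalities of a max--min duality and to place all of the difficulty in the ``strong'' direction. Writing $q(J) = \sum_{j \in J} q_j$, the inequality $\max_{J} \{ f_1(J) + f_2(J) \} \le \inf_{q} \{ g_1(q) + g_2(-q) \}$ is immediate and in fact holds for \emph{arbitrary} $f_1, f_2$: for any fixed $J \subseteq N$ and any $q \in \RR^{N}$ the definitions of the conjugates give $g_1(q) \ge f_1(J) - q(J)$ and $g_2(-q) \ge f_2(J) + q(J)$; adding these and then maximizing over $J$ on the left while taking the infimum over $q$ on the right yields the claim. Hence the entire content of the theorem is the reverse inequality, i.e.\ the absence of a duality gap, and it is exactly here that M$^{\natural}$-concavity must be used, since the reverse inequality is false for general set functions.

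For the reverse inequality I would exhibit a dual certificate $q^{\ast}$ attaining the infimum and matching the primal optimum. The cleanest route uses conjugate duality in discrete convex analysis: the conjugate $g_i$ of an M$^{\natural}$-concave function is L$^{\natural}$-convex, and by the biconjugacy theorem $f_i$ is recovered as the concave conjugate of $g_i$. In this framework the asserted identity is precisely the Fenchel-type min--max relation between the M$^{\natural}$-concave pair $(f_1, f_2)$ and the L$^{\natural}$-convex pair $(g_1, g_2)$, and the existence of an optimal $q^{\ast}$ follows from the discrete separation theorem for M$^{\natural}$-concave functions, equivalently from the M$^{\natural}$-concave intersection theorem. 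Concretely, one would pass to the concave closures $\bar f_1, \bar f_2 : \RR^{N} \to \RR \cup \{ -\infty \}$, apply the ordinary convex Fenchel duality theorem to obtain a real $q^{\ast}$ with no gap on the continuous relaxation, and then invoke the decisive structural fact that, because each $f_i$ is M$^{\natural}$-concave, the perturbed problem $\max_{x \in [0,1]^{N}} \{ \bar f_i(x) - \langle q, x \rangle \}$ always admits a $0/1$ maximizer; this is what identifies $g_i(q)$ with the continuous conjugate and lets the continuous optimum descend to an integral set $J$.

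The main obstacle is thus the combinatorial heart of this strong direction: showing that the base-type polyhedra associated with $f_1$ and $f_2$ intersect integrally, so that both the primal maximizer $J$ and the dual optimizer $q^{\ast}$ can be taken at lattice points. I expect to establish this by an augmenting-path argument on an auxiliary exchange graph built from the local exchange operations guaranteed by (M$^{\natural}$-EXC): starting from a candidate pair and a tentative potential $q$, either a shortest augmenting path strictly improves the objective, or its absence certifies optimality and produces $q^{\ast}$ together with the no-gap equality. The same augmenting-path / total-dual-integrality structure also yields the final assertion that $q$ may be restricted to integers when $f_1$ and $f_2$ are integer-valued, since the potentials constructed in that case are integral. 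Apart from this intersection/integrality core, the remaining steps are the routine verifications that the conjugates are well defined (here $\dom g_1 = \dom g_2 = \RR^{N}$ because $N$ is finite and each $f_i$ is bounded on its finite domain) and that the hypotheses of the continuous duality theorem are met.
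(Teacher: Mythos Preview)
The paper does not prove this theorem. Theorem~\ref{THfenchelmnatsetfn} is quoted from the literature---specifically \cite[Theorem 3.1]{Mvmfen98} and \cite[Theorem 8.21 (1)]{Mdcasiam}---as an input to the proof of Theorem~\ref{THmultexchmnat-str}; no argument for it appears in the paper itself. So there is no ``paper's own proof'' against which to compare your proposal.

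As a standalone sketch, your outline is broadly how the result is established in those references: weak duality is trivial, and strong duality comes from polyhedral Fenchel duality applied to the concave closures $\bar f_i$, combined with the structural fact that for M$^{\natural}$-concave $f_i$ the perturbed maxima are attained at $0/1$ points; the M$^{\natural}$-concave intersection theorem (equivalently, the discrete separation theorem) is indeed the combinatorial core, and the augmenting-path/potential mechanism you mention is the standard way to prove it and to obtain an integral $q^{\ast}$ when the $f_i$ are integer-valued. One terminological slip: the issue is not that ``base-type polyhedra intersect integrally'' but rather that $\max_{x \in [0,1]^{N}}\{\bar f_1(x)+\bar f_2(x)\}$ coincides with $\max_{J \subseteq N}\{f_1(J)+f_2(J)\}$, i.e., that no gap is introduced when passing from the continuous relaxation back to subsets; this is precisely what the intersection theorem supplies. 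Beyond that, the plan is sound, though it remains a plan: you have correctly named the key lemmas without actually proving any of them.
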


We also need the following
consequence of the exchange property (M$^{\natural}$-EXC).

\begin{lemma} \label{LMmexcP2P3}
If $f$ satisfies {\rm (M$^{\natural}$-EXC)},
then, for any $X, Y$ with $|X| \leq |Y|$ 
and $i \in X \setminus Y$, 
there exists $j \in Y \setminus X$ such that
$f( X) + f( Y )   \leq  f( X - i + j) + f( Y + i -j)$.
\end{lemma}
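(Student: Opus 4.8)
The plan is to start from {\rm (M$^{\natural}$-EXC)} applied to the triple $(X, Y, i)$. This yields one of two outcomes: either the genuine swap inequality $f(X)+f(Y)\le f(X-i+j)+f(Y+i-j)$ holds for some $j\in Y\setminus X$, in which case we are done immediately, or only the move-out inequality $f(X)+f(Y)\le f(X-i)+f(Y+i)$ holds. The entire content of the lemma is to upgrade the latter outcome into a genuine swap, and this is precisely where the cardinality hypothesis $|X|\le|Y|$ must enter, since without it the move-out case is perfectly admissible.

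The clean seed of the argument is the special case $X\setminus Y=\{i\}$. Here I would apply {\rm (M$^{\natural}$-EXC)} a second time, now to the reversed pair $(Y+i,\, X-i)$ and to an arbitrary element $j\in Y\setminus X$, which is nonempty because $|Y\setminus X|\ge|X\setminus Y|\ge 1$ under $|X|\le|Y|$. Since $X-i\subseteq Y\subseteq Y+i$ in this case, the set $(X-i)\setminus(Y+i)$ is empty, so the swap alternative of {\rm (M$^{\natural}$-EXC)} is vacuous and the move-out alternative is forced: $f(Y+i)+f(X-i)\le f((Y+i)-j)+f((X-i)+j)=f(Y+i-j)+f(X-i+j)$. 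Chaining this with the assumed inequality $f(X)+f(Y)\le f(X-i)+f(Y+i)$ produces the desired swap for that $j$. This two-application trick is self-contained and uses the exchange axiom only.

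For the general case I would first reduce to disjoint $X$ and $Y$: deleting the elements of $N\setminus(X\cup Y)$ and contracting the common part $X\cap Y$ both preserve M$^{\natural}$-concavity and leave the target inequality unchanged, so it suffices to treat $X\cap Y=\emptyset$, where $X\setminus Y=X$ and $|X|\le|Y|$. I would then induct on $|X|$, with the base case $|X|=1$ being the seed above, peeling off an element $i'\in X\setminus\{i\}$ via {\rm (M$^{\natural}$-EXC)} and invoking the inductive hypothesis on the smaller pair $(X-i',\, Y+i')$. The main obstacle is exactly here: the inductive hypothesis, combined with a move-out step, naturally returns a two-for-two exchange (removing $i$ and $i'$ from $X$ while inserting a single element of $Y$) rather than the single $i$-for-$j$ swap we need. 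Converting these two-for-two detours into one admissible swap, and exploiting the surplus $|Y|\ge|X|$ to rule out the length-reducing shortcuts that would otherwise obstruct this, is the technical heart; I expect to handle it by a shortest-exchange (no-shortcut) selection in the spirit of the argument in \cite{Mmultexc17}.
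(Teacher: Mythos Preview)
The paper does not actually prove this lemma; it simply cites \cite[Theorem~4.2]{MS99gp}, where the equivalence of {\rm (M$^{\natural}$-EXC)} with a package of stronger exchange axioms (including the present one) is established. Your proposal is therefore attempting something strictly more ambitious than the paper.

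Your seed case $X\setminus Y=\{i\}$ is correct and complete: once $X-i\subseteq Y\subseteq Y+i$, the second application of {\rm (M$^{\natural}$-EXC)} to $(Y+i,\,X-i,\,j)$ has an empty swap alternative, so the move-out inequality is forced and chains to give the desired $i\leftrightarrow j$ swap. The reduction to disjoint $X,Y$ is also fine.

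The gap is exactly where you flag it, and it is a real one. After peeling off $i'\ne i$ and invoking the inductive hypothesis on $(X-i',\,Y+i')$, the element $j$ returned may equal $i'$ (collapsing back to the bare move-out inequality for $i$) or lie in $Y\setminus X$ (giving only the two-for-two bound $f(X)+f(Y)\le f(X-i-i'+j)+f(Y+i+i'-j)$). One further application of {\rm (M$^{\natural}$-EXC)} to $(Y+i+i'-j,\,X-i-i'+j,\,i')$ does close the argument in its move-out branch, but in its swap branch it either cycles back to $f(X-i)+f(Y+i)$ or produces yet another two-for-two with a new element $k\in X\setminus\{i,i'\}$. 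So a naive induction loops, and some extremal choice is genuinely needed to terminate it. Your pointer to a ``shortest-exchange / no-shortcut selection in the spirit of \cite{Mmultexc17}'' is not a proof: the argument in \cite{Mmultexc17} (and its refinement in Section~\ref{SCproof1direct} here) proceeds via Fenchel-type duality, not via a combinatorial no-shortcut device, so the analogy does not supply the missing step. If you want a self-contained proof you must either exhibit the terminating extremal selection explicitly or reproduce the relevant argument from \cite{MS99gp}.
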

\begin{proof}
This is a direct translation of the exchange property (ii)
of  (M$^{\natural}$-EXC$_{\rm p}$) 
given in  \cite[Theorem 4.2]{MS99gp}
for M$^{\natural}$-convex function on $\mathbb{Z}\sp{N}$.
\qed
\end{proof}

We prove Theorem~\ref{THmultexchmnat-str} 
in Sections \ref{SCprooftoFenc} to \ref{SCproofevalFenc}.
In Section \ref{SCprooftoFenc}
 the stronger multiple exchange property 
(M$^{\natural}$-EXC$_{\rm ms}$)
is reformulated in terms of the conjugate functions 
by the Fenchel-type duality.
The submodularity of the conjugate functions is revealed 
in Section \ref{SCproofsubm}
and the dual objective function is evaluated
in Section \ref{SCproofevalFenc}.

\subsection{Translation to the conjugate functions}
\label{SCprooftoFenc}

Let $f: 2^{N} \to \RR \cup \{ -\infty \}$ 
be an M$^{\natural}$-concave function,
$X, Y \in \dom f$ and $I \subseteq X \setminus Y$.
To express the size constraint with bound $k$, 
we define
$\beta(J;k)=0$ if $|J| \leq k$ and 
$\beta(J;k)= -\infty$ otherwise.
We shall prove
\begin{align}
f( X) + f( Y )   \leq 
 \max_{J \subseteq Y \setminus X} 
 \{  f((X \setminus I) \cup J) +f((Y \setminus J) \cup I) + \beta(J;|I|) \} ,
\label{mnatconcavexcmult2}
\end{align}
which is equivalent to (\ref{mnatconcavexcmult-str}).
With the notations
\begin{align}
 &C = X \cap Y,
\qquad
 X_{0} = X \setminus Y = X \setminus C,
\qquad
 Y_{0} = Y \setminus X = Y \setminus C ,
\label{mexcCX0Y0def}
\\
& f_{1}(J) =  f((X \setminus I) \cup J) 
  = f( (X_{0} \setminus I) \cup C \cup J)
\qquad (J \subseteq Y_{0}),
\label{mexcf1def}
\\
& \tilde f_{1}(J) = f_{1}(J) + \beta(J;|I|) 
  = f( (X_{0} \setminus I) \cup C \cup J) + \beta(J;|I|) 
\qquad (J \subseteq Y_{0}),
\label{mexcf1defb}
\\
& f_{2}(J) = f((Y \setminus J) \cup I)
  = f(  I \cup C \cup (Y_{0} \setminus J) )
\qquad (J \subseteq Y_{0}),
\label{mexcf2def}
\end{align}
the inequality (\ref{mnatconcavexcmult2}) is rewritten as 
\begin{align}
f( X) + f( Y )   \leq 
 \max_{J \subseteq Y_{0}}  \{ \tilde f_{1}(J) + f_{2}(J)  \}.
\label{mnatconcavexcmult3}
\end{align}

\begin{lemma} \label{LMf1f2dom}
{\rm (1)}
$\dom f_{1}$, $\dom \tilde f_{1}$, and $\dom f_{2}$ are
nonempty.
\\
{\rm (2)}
$f_{1}$, $\tilde f_{1}$, and $f_{2}$ 
are M$^{\natural}$-concave functions.
\end{lemma}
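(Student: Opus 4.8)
The plan is to prove part (2) first, since part (1)'s nonemptiness and part (2)'s exchange property are most conveniently handled together. For $f_1$ and $f_2$, M$^{\natural}$-concavity is inherited from $f$ through elementary operations known to preserve it. Indeed, $f_1(J) = f((X_0\setminus I)\cup C\cup J)$ is the restriction of $f$ obtained by fixing the elements of $(X_0\setminus I)\cup C$ to be present and those of $N\setminus((X_0\setminus I)\cup C\cup Y_0)$ to be absent, leaving $J$ to range over the ground set $Y_0$; such a restriction of an M$^{\natural}$-concave function is again M$^{\natural}$-concave. For $f_2$, I would first observe that $g(J') = f(I\cup C\cup J')$ $(J'\subseteq Y_0)$ is M$^{\natural}$-concave by the same restriction argument, and then that $f_2(J) = g(Y_0\setminus J)$ is obtained from $g$ by complementing the free variables; since M$^{\natural}$-concavity is preserved under the complementation $J\mapsto Y_0\setminus J$ (which merely swaps the roles of the two sets in the exchange property), $f_2$ is M$^{\natural}$-concave as well.

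The crux is $\tilde f_1 = f_1 + \beta(\cdot;|I|)$, because adding the cardinality cutoff $\beta$ destroys M$^{\natural}$-concavity in general and must be shown to be harmless here. I would verify {\rm (M$^{\natural}$-EXC)} for $\tilde f_1$ directly. Fix $P, Q\in\dom\tilde f_1$ (so $|P|, |Q|\le|I|$) and $i\in P\setminus Q$, and apply the exchange property of $f_1$. The decisive point is that clause (ii) of the exchange preserves both cardinalities, $|P-i+j| = |P|$ and $|Q+i-j| = |Q|$, so the two $\beta$-terms are unchanged and any (ii)-type exchange for $f_1$ is at once a (ii)-type exchange for $\tilde f_1$. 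The only danger is a (i)-type exchange that raises $|Q+i| = |Q|+1$ above $|I|$, which can occur only when $|Q| = |I|\ge|P|$. I would therefore split on cardinality: if $|P|\le|Q|$, then Lemma~\ref{LMmexcP2P3} applied to $f_1$ furnishes a cardinality-preserving (ii)-type exchange, valid for $\tilde f_1$; if $|P|>|Q|$, then $|Q|+1\le|P|\le|I|$, so the offending (i)-exchange stays inside $\dom\tilde f_1$ and either clause of $f_1$'s exchange transfers directly. In every case the exchanged sets remain in $\dom\tilde f_1$, since $f_1(P)+f_1(Q)>-\infty$ forces the new values to be finite. This establishes {\rm (M$^{\natural}$-EXC)} for $\tilde f_1$.

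For part (1), I would exhibit explicit points by iterating single exchanges on the effective domain, using that $\dom f$ is an M$^{\natural}$-convex set (the domain conditions built into clauses (i), (ii) of the definition). To reach $\dom f_1$, start from $X = ((X_0\setminus I)\cup C)\cup I\in\dom f$ and remove the elements of $I$ one at a time; at each step the exchange property of $\dom f$, taken against the partner derived from $Y$, either deletes the chosen element of $I$ or swaps it for an element of $Y_0$, so after $|I|$ steps one lands on a set $(X_0\setminus I)\cup C\cup J$ with $J\subseteq Y_0$ and $|J|\le|I|$, giving a point that lies in both $\dom f_1$ and $\dom\tilde f_1$. Symmetrically, adding the elements of $I$ to $Y = C\cup Y_0\in\dom f$ one at a time produces $I\cup C\cup J'$ with $J'\subseteq Y_0$, and setting $J = Y_0\setminus J'$ yields a point of $\dom f_2$.

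I expect the main obstacle to be precisely the treatment of $\tilde f_1$: recognizing that the cardinality bound interacts with the exchange only through the boundary case $|Q| = |I|$, and that Lemma~\ref{LMmexcP2P3} is exactly the device that converts the troublesome (i)-exchange there into a cardinality-neutral (ii)-exchange. The claims for $f_1$ and $f_2$ and the nonemptiness in part (1) are routine once the preservation of M$^{\natural}$-concavity under restriction and complementation and the effective-domain exchange are invoked.
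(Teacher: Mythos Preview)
Your proposal is correct and follows essentially the same route as the paper. Part (1) is identical in substance: the paper also runs an induction on $|I|$, at each step applying {\rm (M$^{\natural}$-EXC)} to the current set against $Y$ (respectively $X$) to peel off one element of $I$, landing on a member of $\dom\tilde f_{1}$ (respectively $\dom f_{2}$). For part (2), the paper likewise dispatches $f_{1}$ and $f_{2}$ as ``easy from {\rm (M$^{\natural}$-EXC)} of $f$''; your restriction/complementation phrasing is just a more explicit rendering of the same observation. The only point of contrast is $\tilde f_{1}$: the paper disposes of it in one line by citing that cardinality truncation (``restriction'') preserves M$^{\natural}$-concavity, whereas you supply the proof of that closure property from scratch via the case split on $|P|\lessgtr|Q|$ and Lemma~\ref{LMmexcP2P3}. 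That is an elaboration of the same step rather than a different argument.
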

\begin{proof}
(1)
We prove  
$\dom \tilde f_{1} \not= \emptyset$ and $\dom f_{1} \not= \emptyset$
by showing
\begin{align}
\mbox{there exists 
$J \subseteq Y \setminus X$ such that
$(X \setminus I) \cup J \in \dom f$ 
and $|J| \leq |I|$}
\label{f1domnonempty}
\end{align}
by induction on $|I|$.
If $|I| = 0$, (\ref{f1domnonempty}) holds trivially with $J=\emptyset$. 
Suppose $|I| \geq 1$ and $I = I' + i$ with $i \not\in I'$.
By the induction hypothesis there exists
 $J' \subseteq Y \setminus X$ such that
$X' := (X \setminus I') \cup J' \in \dom f$
and $|J'| \leq |I'|$. 
By (M$^{\natural}$-EXC) for $(X', Y, i)$, 
(i) $X' - i \in \dom f$ or
(ii) there exists $j \in Y \setminus X'\  (\subseteq Y \setminus X)$ such that
$X' - i + j \in \dom f$.
In case (i), we set $J = J'$ to obtain
$|J| = |J'| \leq |I'| < |I|$
and
$(X \setminus I) \cup J = (X \setminus (I' + i)) \cup J' = X' -  i \in \dom f$.
In case (ii), we set $J = J' + j$ to obtain
$|J| = |J'|+1 \leq |I'|+1 =|I|$
and
$(X \setminus I) \cup J = ( X \setminus (I' + i) ) \cup (J' + j) = X' -  i + j \in \dom f$.
Thus  (\ref{f1domnonempty}) is proved.

To prove $\dom f_{2} \not= \emptyset$,
we show
\begin{align}
\mbox{there exists 
$J \subseteq Y \setminus X$ such that
$(Y \setminus J) \cup I \in \dom f$}
\label{f2domnonempty}
\end{align}
by induction on $|I|$ (by almost the same argument as above).
If $|I| = 0$, (\ref{f2domnonempty}) holds trivially with $J=\emptyset$. 
Suppose $|I| \geq 1$ and $I = I' + i$ with $i \not\in I'$.
By the induction hypothesis there exists
 $J' \subseteq Y \setminus X$ such that
$Y' := (Y \setminus J') \cup I' \in \dom f$.
By (M$^{\natural}$-EXC) for $(X, Y', i)$, 
(i) $Y' + i \in \dom f$ or
(ii) there exists $j \in Y' \setminus X \  (\subseteq Y \setminus X)$ such that
$Y' + i - j \in \dom f$.
In case (i), we set $J = J'$ to obtain
$(Y \setminus J) \cup I = (Y \setminus J') \cup (I' + i)  = Y' +  i \in \dom f$.
In case (ii), we set $J = J' + j$ to obtain
$(Y \setminus J) \cup I =(Y \setminus (J' + j) ) \cup (I' + i)  = Y' +  i - j \in \dom f$.
Thus  (\ref{f2domnonempty}) is proved.

(2)
For $f_{1}$ and $f_{2}$,
the M$^{\natural}$-concavity is easy to see from 
(M$^{\natural}$-EXC) of $f$.
Then the function $\tilde f_{1}$, being a restriction of $f_{1}$, 
is also M$^{\natural}$-concave.
\qed
\end{proof}

Consider the (convex) conjugate functions of $\tilde f_{1}$ and $f_{2}$
given by
\begin{align}
\tilde g_{1}(q) &=  
 \max_{J \subseteq Y_{0}} \{ \tilde f_{1}(J) - q(J) \} 
\qquad (q \in \RR^{Y_{0}}),
\label{tg1def}
\\
g_{2}(q) &= 
 \max_{J \subseteq Y_{0}} \{  f_{2}(J) - q(J) \} 
\qquad (q \in \RR^{Y_{0}}),
\label{g2def}
\end{align}
where $q(J) = \sum_{j \in J} q_{j}$.
By Theorem~\ref{THfenchelmnatsetfn}
the desired inequality (\ref{mnatconcavexcmult3})
can be rewritten as
\begin{align}
f( X) + f( Y )   \leq 
 \inf_{q \in \RR^{N}} \{ \tilde g_{1}(q) + g_{2}(-q) \}.
\label{mnatconcavexcmult4}
\end{align}

\subsection{Submodularity}
\label{SCproofsubm}

To compute $\tilde g_{1}(q) + g_{2}(-q)$ 
in (\ref{mnatconcavexcmult4})
we relate $\tilde g_{1}$ and $g_{2}$, respectively, 
to
\begin{align}
\tilde g(p) &=  \max_{Z \subseteq N} \{  f(Z) + \beta(Z; |X|) - p(Z) \} 
\qquad (p \in \RR^{N}),
\label{gpdef-size}
\\
g(p) &=  \max_{Z \subseteq N} \{  f(Z) - p(Z) \} 
\qquad (p \in \RR^{N}).
\label{gpdef}
\end{align}
We use notation
$f[-p](Z) = f(Z) - p(Z)$ for $Z \subseteq N$.

Since 
$f(Z) + \beta(Z; |X|)$ and $f(Z)$
are M$^{\natural}$-concave,
the conjugacy theorem in discrete convex analysis
(\cite[Theorems 8.4, (8.10)]{Mdcasiam}, \cite[Theorem 3.4]{Mbonn09})
 shows that
both $\tilde g$ and $g$ are L$^{\natural}$-convex functions on $\RR\sp{N}$.
In particular, they are submodular:
\begin{align} 
  \tilde g(p) + \tilde g(p') & \geq \tilde g(p \vee p') + \tilde g(p \wedge p')
\qquad (p, p' \in \RR^{N}) ,
\label{tgsubm}
\\
  g(p) +  g(p') & \geq  g(p \vee p') + g(p \wedge p')
\qquad (p, p' \in \RR^{N}) ,
\label{gsubm}
\end{align}
where $p \vee p'$ and $p \wedge p'$ denote, respectively, the vectors of component-wise
maximum and minimum of $p$ and $q$.

For our proof we need the following form of submodularity across
$\tilde g$ and $g$.

\begin{lemma} \label{LMsbmtgg}
\quad
$  \tilde g(p) + g(q) \geq  \tilde g(p \wedge q) + g(p \vee q) $ 
$\qquad (p, q \in \RR^{N})$.
\end{lemma}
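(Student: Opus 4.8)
The plan is to evaluate both sides at explicit maximizers and reduce the lemma to a combinatorial transfer between two sets. Write $S = \{ i \in N : p_{i} > q_{i} \}$, and let $Z$ attain the maximum defining $\tilde g(p \wedge q)$ in (\ref{gpdef-size}) (so that $|Z| \le |X|$) and let $W$ attain the maximum defining $g(p \vee q)$ in (\ref{gpdef}). Since $\tilde g(p) \ge f(A) - p(A)$ for every $A \subseteq N$ with $|A| \le |X|$ and $g(q) \ge f(B) - q(B)$ for every $B \subseteq N$, it suffices to produce a pair $(A,B)$ with $|A| \le |X|$ such that
\[
 f(A) + f(B) \ge f(Z) + f(W) \quad \text{and} \quad p(A) + q(B) \le (p \wedge q)(Z) + (p \vee q)(W) ;
\]
adding the two conjugate bounds for such a pair gives $\tilde g(p) + g(q) \ge \tilde g(p \wedge q) + g(p \vee q)$ at once. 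Note that this reduction is internal to the conjugate functions and does not invoke the Fenchel-type duality of Theorem~\ref{THfenchelmnatsetfn}.

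The useful feature of this reformulation is that, on every coordinate outside $S$, the prices $p \wedge q$ and $p \vee q$ charged to $Z$ and $W$ coincide with $p$ and $q$; hence for the candidate $(A,B) = (Z,W)$ the price inequality collapses to $\sum_{i \in Z \cap S}(p_{i} - q_{i}) \le \sum_{i \in W \cap S}(p_{i} - q_{i})$. If this holds, $(Z,W)$ already works. Otherwise I would remove the excess by transferring the elements of $(Z \setminus W) \cap S$ from the first set to the second, one at a time. Each such transfer of an element $i$ is carried out by applying {\rm (M$^{\natural}$-EXC)} to the current pair: either $i$ moves directly to the second set (so the first set shrinks and the total price drops by $p_{i} - q_{i} > 0$), or $i$ is swapped for some element $j$ of the second set (so the first set keeps its cardinality). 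In either branch $f(A) + f(B)$ does not decrease, and $|A| \le |X|$ is preserved, since neither a direct move nor a swap raises $|A|$ above its starting value $|Z| \le |X|$. A direct accounting of the prices then shows that, as long as every move is a direct transfer or a swap against an element outside $S$, the total price never rises above $(p \wedge q)(Z) + (p \vee q)(W)$ once all of $(Z \setminus W) \cap S$ has been cleared from the first set.

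The main obstacle is that {\rm (M$^{\natural}$-EXC)} gives no control over the swap partner $j$: a forced swap against an element $j \in S$ can both raise the price and push a correctly placed $S$-element back into the first set, so the naive greedy transfer need not make monotone progress. Neutralizing these bad swaps is the crux of the argument. I would approach it in one of two ways: either by fixing $Z$ and $W$ extremally among all maximizers (for instance $Z$ inclusion-minimal and $W$ inclusion-maximal) and using their optimality to rule out swap partners inside $S$; or by an amortized argument that processes the elements of $(Z \setminus W) \cap S$ in decreasing order of $p_{i} - q_{i}$ and bounds the net price change over a whole sequence of single exchanges---drawing on {\rm (M$^{\natural}$-EXC)} and the cardinality-controlled exchange of Lemma~\ref{LMmexcP2P3}---rather than move by move. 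Once the transfer terminates, the pair $(A,B)$ satisfies both displayed inequalities and the reduction of the first paragraph finishes the proof.
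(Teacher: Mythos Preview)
Your reduction in the first paragraph is sound, and you have correctly identified the genuine crux: nothing in {\rm (M$^{\natural}$-EXC)} prevents the swap partner $j$ from landing in $S$, so the greedy transfer can cycle or raise the price. But you stop precisely there. Neither of your two suggested fixes is carried out, and neither is obviously viable. Choosing $Z$ and $W$ extremal among maximizers constrains the \emph{first} exchange at best; after one step the pair $(A,B)$ is no longer extremal for anything, so the optimality arguments no longer bite. The amortized idea (process $(Z\setminus W)\cap S$ in decreasing order of $p_i-q_i$) does not control swaps against elements of $(W\setminus Z)\cap S$, which were never in your processing queue and may carry arbitrarily large $p_j-q_j$; Lemma~\ref{LMmexcP2P3} only applies when $|A|\le|B|$, which you have not arranged. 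As written, the proposal is a plan with the hard step missing.

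The paper avoids the all-of-$S$-at-once difficulty by a different decomposition. It first reduces Lemma~\ref{LMsbmtgg} to the monotonicity statement $\tilde g(p)-\tilde g(q)\ge g(p)-g(q)$ for $p\ge q$ (Lemma~\ref{LMtggstrongquot}), using only the ordinary submodularity (\ref{gsubm}) of $g$. It then localizes: it is enough to prove the monotonicity for $p=q+\alpha\chi_k$ with $\alpha$ smaller than a fixed $\varepsilon(q)$. In this one-coordinate case the troublesome set $S$ is a singleton $\{k\}$, so there is at most one ``bad'' element. Taking maximizers $U$ for $g(p)$ and $W$ for $\tilde g(q)$ with $|W\setminus U|$ minimal, the paper disposes of the easy cases $|U|\le|X|$ and $W\subseteq U$ directly, and in the remaining case uses Lemma~\ref{LMmexcP2P3} (legitimately, since then $|U|>|X|\ge|W|$) to produce a swap that strictly decreases $|W\setminus U|$, contradicting minimality. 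The $\varepsilon(q)$ bound is exactly what neutralizes the single bad swap when $k\in W\setminus U$. The localization to $|S|=1$ is the idea your argument is missing.
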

\begin{proof}
It follows from Lemma~\ref{LMtggstrongquot} below and (\ref{gsubm}) that
\begin{align*}
 \tilde g(p) - \tilde g(p \wedge q) 
\geq  g(p) - g(p \wedge q) 
\geq  g(p \vee q) - g(q) ,
\end{align*}
which is equivalent to the claim.
\qed
\end{proof}

\begin{lemma} \label{LMtggstrongquot}
For any $p,q \in \RR\sp{N}$ with $p \geq q$, it holds%
\footnote{%%%%%%%%%%%%%%
(\ref{tggstrquot}) means a kind of strong quotient relation.
}%%%%%%%%%%%%%%%%%%%%%%%
\begin{align}
& \tilde g(p) - \tilde g(q) \geq  g(p) - g(q) .
\label{tggstrquot}
\end{align}
\end{lemma}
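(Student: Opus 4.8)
The plan is to prove the equivalent monotonicity statement that the function $h := \tilde g - g$ is nondecreasing on $\RR^{N}$, which is exactly (\ref{tggstrquot}). Since both $\tilde g$ and $g$ are L$^{\natural}$-convex, hence convex and piecewise linear (each is a finite maximum of affine functions of $p$), it suffices to show that along every coordinate direction the right-hand directional derivative of $h$ is nonnegative: any $p \geq q$ is reached from $q$ by increasing the coordinates one at a time, and a piecewise-linear univariate function whose right derivative is nonnegative everywhere is nondecreasing. Writing $\partial_{i}^{+}$ for the right derivative in direction $e_{i}$, we have $\partial_{i}^{+} h(p) = \partial_{i}^{+}\tilde g(p) - \partial_{i}^{+} g(p)$.

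The next step is to evaluate these one-sided derivatives through the maximizers of $f[-p]$. For $g(p) = \max_{Z} f[-p](Z)$, increasing $p_{i}$ penalizes exactly the sets containing $i$, so for small $t>0$ the surviving maximizers are those minimizing $[\,i \in Z\,]$; hence $\partial_{i}^{+} g(p) = -m_{p}$, where $m_{p} \in \{0,1\}$ equals $1$ iff every (unconstrained) maximizer of $f[-p]$ contains $i$. Likewise $\partial_{i}^{+}\tilde g(p) = -\widetilde m_{p}$, where $\widetilde m_{p}=1$ iff every maximizer of $f[-p]$ over $\{Z : |Z| \leq |X|\}$ contains $i$. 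Thus $\partial_{i}^{+} h(p) = m_{p} - \widetilde m_{p}$, and the whole lemma reduces to the single-point inequality $m_{p} \geq \widetilde m_{p}$, i.e.\ to the implication that if every size-constrained maximizer contains $i$, then every unconstrained maximizer contains $i$.

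I would prove this implication in contrapositive form by a direct appeal to the exchange property of $f[-p]$, which is M$^{\natural}$-concave since $f$ is and $-p(Z)$ is linear. Suppose $B$ is an unconstrained maximizer with $i \notin B$, and let $A$ be any size-constrained maximizer; if $i \notin A$ we are done, so assume $i \in A \setminus B$. Applying \mbox{(M$^{\natural}$-EXC)} to $(A, B, i)$ yields either $f[-p](A-i) + f[-p](B+i) \geq f[-p](A)+f[-p](B)$, or, for some $j \in B \setminus A$, $f[-p](A-i+j)+f[-p](B+i-j)\geq f[-p](A)+f[-p](B)$. Since $B$ is a global maximizer, $f[-p](B+i)\leq f[-p](B)$ and $f[-p](B+i-j)\leq f[-p](B)$, so in either case the remaining term satisfies $f[-p](A-i)\geq f[-p](A)$ or $f[-p](A-i+j)\geq f[-p](A)$. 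Because $|A-i|\leq|X|$ and $|A-i+j|=|A|\leq|X|$, the set $A-i$ (resp.\ $A-i+j$) is feasible for the constrained problem and hence is itself a size-constrained maximizer that avoids $i$, as required.

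The routine parts, namely that $g$ and $\tilde g$ are finite everywhere (guaranteed by $X \in \dom f$ with $|X|\leq|X|$) and that the maxima are attained, pose no difficulty. The genuine content, and the step I expect to be the crux, is the single-point claim $m_{p} \geq \widetilde m_{p}$: it is precisely here that the cardinality bound must be honored, and the argument works only because the exchange property produces a replacement ($A-i$ or $A-i+j$) whose size does not exceed that of the constrained optimum $A$, so that feasibility for the $|Z|\leq|X|$ problem is preserved. This size control is exactly the refinement of the bound-free argument of \cite{Mmultexc17} that the cardinality requirement demands.
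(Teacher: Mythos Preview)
Your proof is correct. Both you and the paper begin by recasting (\ref{tggstrquot}) as monotonicity of $\tilde g - g$ and reducing to an increase along a single coordinate direction, and both ultimately invoke the exchange property of $f[-p]$; so the overall strategy is the same. The execution, however, differs: the paper perturbs $p = q + \alpha\chi_{k}$ with $\alpha$ below an explicit $\varepsilon(q)$, chooses maximizers $U$ of $g$ at $p$ and $W$ of $\tilde g$ at $q$ (at two different points) with $|W\setminus U|$ minimal, and then needs the size-aware exchange of Lemma~\ref{LMmexcP2P3} plus a two-case analysis on whether $k\in W\setminus U$. Your argument works entirely at a single point $p$, identifying $\partial_{i}^{+}h(p)=m_{p}-\widetilde m_{p}$ and reducing everything to the clean combinatorial claim ``if some unconstrained maximizer avoids $i$, then so does some constrained maximizer,'' which follows from the basic {\rm (M$^{\natural}$-EXC)} because both $A-i$ and $A-i+j$ have cardinality at most $|A|\leq |X|$. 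This buys you a shorter proof that avoids the $\varepsilon$-bookkeeping, the minimality device, and Lemma~\ref{LMmexcP2P3}; the paper's version, on the other hand, makes the ``strong quotient'' nature of the inequality more visible through the explicit pair $(U,W)$ and dovetails with the integer-vector variant in Remark~\ref{RMintvalued}.
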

\begin{proof}
The assertion (\ref{tggstrquot}) is equivalent to the monotonicity
of $\tilde g(p) - g(p)$ in $p$.
To prove this it suffices to show
that for each $q \in \RR^{N}$
there exists a positive number $\varepsilon(q) > 0$
such that 
(\ref{tggstrquot}) holds
for all $p \in \RR^{N}$ of the form
\begin{equation} \label{p=q+alphak}
p = q + \alpha \chi_{k}
\end{equation}
with $0 \leq \alpha < \varepsilon(q)$,
where $\chi_{k}$ denotes the $k$th unit vector for $k \in N$. 
We will show that
the minimum of the nonzero absolute values of 
$f[-q](Z_{1})+ f[-q](Z_{2})-f[-q](Z_{3}) -f[-q](Z_{4})$
over all $Z_{1}, Z_{2}, Z_{3}, Z_{4} \subseteq N$
serves as such $\varepsilon(q)$.
We define
\begin{align*}
\varepsilon(q) =  \min\{  |f[-q](Z_{1}) \! + \!  f[-q](Z_{2}) \! - \! 
            f[-q](Z_{3}) \! - \! f[-q](Z_{4})| \not= 0 
  \mid  Z_{1}, Z_{2}, Z_{3}, Z_{4} \subseteq N \}.
\end{align*}

Recalling 
(\ref{gpdef-size}) and (\ref{gpdef}), 
denote $m=|X|$ and take $U$ and $W$ 
such that
\[
 g(p) = f(U) - p(U),
\quad 
\tilde g(q) = f(W) - q(W),
\quad 
|W| \leq m.
\]
We choose such $U$, $W$ with minimum $|W \setminus U|$.
Then (\ref{tggstrquot})
 is rewritten as 
\begin{align}
 [f(U) - p(U)] + [f(W) - q(W)] 
 \leq  \tilde g(p) + g(q). 
\label{tggstrquot2}
\end{align}
This inequality can be shown as follows.
\begin{itemize}
\item
If $|U| \leq m$, we have
$f(U) - p(U) \leq  \tilde g(p)$
by (\ref{gpdef-size}) 
as well as 
$f(W) - q(W) \leq  g(q)$
by (\ref{gpdef}).
Hence (\ref{tggstrquot2}) holds.

\item
If $W \subseteq U$, then  
$p(U) + q(W)  \geq p(W) + q(U)$ by $p \geq q$, and hence
\[
 [f(U) - p(U)] + [f(W) - q(W)] 
  \leq 
 [ f(W)  - p(W) ] + [ f(U) - q(U) ]
\leq  \tilde g(p) + g(q), 
\]
which shows (\ref{tggstrquot2}).

\item
The remaining case,
where $|U| > m$ and $W \setminus U \not= \emptyset$,
is excluded by the minimality of $|W \setminus U|$,
as shown below.
\end{itemize}

Suppose that
$|U| > m$ and $W \setminus U \not= \emptyset$.
Then $|U| > m \geq |W|$.
Take any $i \in W \setminus U$, which is possible since
$W \setminus U \not= \emptyset$.
By Lemma~\ref{LMmexcP2P3} 
there exists $j \in U \setminus W$ such that 
\begin{align}
 f(W) + f(U)  \leq  f( W - i + j) + f( U + i -j) =  f(W') + f(U') ,
\label{fWfUfW'fU'}
\end{align}
where $W' = W - i + j$ and $U'= U + i -j$.
Note that $|W'| =|W| \leq m$.

\begin{itemize}
\item
Case of $k \not\in W \setminus U$:
Since $i \not= k$, we have $p_{i} = q_{i}$ and $p_{j} \geq q_{j}$.
Then, by (\ref{fWfUfW'fU'}), we have
\begin{align*}
 & [f(U) - p(U)] + [f(W) - q(W)] 
\\
 &\leq  
 [f( U + i -j) - p( U + i -j)]
 + [f( W - i + j) - q( W - i + j)] 
\\
 &= [f(U') - p(U')] + [f(W') -q(W')] .
\end{align*}
Since $|W'| =|W| \leq m$, this means 
$ f(W')-q(W') =  f(W)-q(W)$ as well as
$f(U') - p(U') = f(U) - p(U)$,
whereas
$W' \setminus U = (W \setminus U) - i$.
This is a contradiction to the minimality of 
$|W \setminus U|$.

\item
Case of $k \in W \setminus U$:
We choose $i=k$ in (\ref{fWfUfW'fU'}) and rewrite (\ref{fWfUfW'fU'}) as
\begin{align}
 f[-q](W) + f[-q](U)  \leq   f[-q](W') + f[-q](U') .
\label{fWfUfW'fU'2}
\end{align}
Here we have
\begin{align}
& f[-q](W') \leq f[-q](W),
\label{fW'fW}
\\  
& f[-q](U') - \alpha = f[-p](U')  \leq f[-p](U) = f[-q](U) 
\label{fU'fU}
\end{align}
by the definitions of $W$ and $U$,  (\ref{p=q+alphak}), $k \in U'$, and $k \not\in U$.
Hence the difference of both sides
of (\ref{fWfUfW'fU'2}) is at most $\alpha$,
whereas $\alpha < \varepsilon(q)$.
Hence we have equality in (\ref{fWfUfW'fU'2}),
and therefore 
$f[-q](W') = f[-q](W)$ in (\ref{fW'fW}).
This is a contradiction to the minimality of 
$|W \setminus U|$, since 
$|W' \setminus U| < |W \setminus U|$.
\end{itemize}
\qed
\end{proof}

\subsection{Evaluation of the Fenchel dual}
\label{SCproofevalFenc}

The desired inequality (\ref{mnatconcavexcmult4})
follows from the following lemma, whose proof uses
Lemma~\ref{LMsbmtgg}.

\begin{lemma} \label{LMg1qg2q}
For any $q \in \RR^{Y_{0}}$, we have
$\tilde g_{1}(q) + g_{2}(-q) \geq f( X) + f( Y )$.
\end{lemma}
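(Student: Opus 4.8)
The plan is to express the two ``restricted'' conjugates $\tilde g_1$ and $g_2$, which live on $Y_0$, in terms of the global conjugates $\tilde g$ and $g$ on $N$ evaluated at cleverly chosen prices, and then to invoke the cross-submodularity of Lemma~\ref{LMsbmtgg}. Abbreviating $A = (X_0 \setminus I) \cup C$ and $B = I \cup C$, we have $f_1(J) = f(A \cup J)$ and $f_2(J) = f(B \cup (Y_0 \setminus J))$ for $J \subseteq Y_0$, with $A \cap Y_0 = B \cap Y_0 = \emptyset$. I would fix a large constant $M > 0$ and define $p, p' \in \RR^N$ by
\[
p_j = \begin{cases} -M & (j \in A),\\ q_j & (j \in Y_0),\\ +M & (\text{otherwise}), \end{cases}
\qquad
p'_j = \begin{cases} -M & (j \in B),\\ q_j & (j \in Y_0),\\ +M & (\text{otherwise}), \end{cases}
\]
the large values serving to force the elements of $A$ (resp.\ $B$) into, and all elements outside $A \cup Y_0$ (resp.\ $B \cup Y_0$) out of, the optimal sets in $\tilde g(p)$ and $g(p')$; since $\dom f$ is finite and $\dom \tilde f_1, \dom f_2 \neq \emptyset$ by Lemma~\ref{LMf1f2dom}, this forcing is valid once $M$ exceeds a threshold depending on $f$ and $q$.

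The key point is a translation of the cardinality bound. For $Z = A \cup J$ with $J \subseteq Y_0$ one has $|Z| = |X| - |I| + |J|$, because $I \subseteq X_0$; hence $\beta(Z; |X|) = \beta(J; |I|)$. Thus for large $M$ the maximizer in $\tilde g(p)$ has the form $A \cup J$ and the size constraint on $Z$ becomes exactly the bound $|J| \le |I|$, which gives (after substituting $K = Y_0 \setminus J$ in $g_2(-q)$ to handle the second conjugate) the identities
\[
\tilde g_1(q) = \tilde g(p) + p(A), \qquad g_2(-q) = g(p') + p'(B) + q(Y_0).
\]
Only the inequalities ``$\le$'' in these identities are actually needed below, so only the straightforward direction of the forcing argument is required.

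Next I would apply Lemma~\ref{LMsbmtgg} to the pair $(p, p')$, giving $\tilde g(p) + g(p') \ge \tilde g(p \wedge p') + g(p \vee p')$. Inspecting the meet and join componentwise shows that $p \wedge p' = -M$ on all of $X = C \cup X_0$ and equals $q$ on $Y_0$, while $p \vee p' = -M$ on $C$, equals $q$ on $Y_0$, and is $+M$ elsewhere. Using $X$ itself as a feasible set in $\tilde g(p \wedge p')$ (admissible since $\beta(X; |X|) = 0$) and $Y$ in $g(p \vee p')$ yields $\tilde g(p \wedge p') \ge f(X) + M|X|$ and $g(p \vee p') \ge f(Y) + M|C| - q(Y_0)$. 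Substituting these bounds together with the identities above and the explicit values $p(A) = -M(|X| - |I|)$, $p'(B) = -M(|I| + |C|)$, every term in $M$ and in $q(Y_0)$ cancels, leaving precisely $\tilde g_1(q) + g_2(-q) \ge f(X) + f(Y)$.

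The main obstacle, and the reason Lemma~\ref{LMsbmtgg} is stated in its asymmetric meet/join form, is to arrange the prices so that the size-constrained function $\tilde g$ receives the meet (which forces the optimal set to be exactly $X$, the set carrying the tight cardinality) while the unconstrained $g$ receives the join (leaving room to insert $Y$). The cardinality translation $\beta(Z; |X|) = \beta(J; |I|)$ is what couples these two requirements; once the prices are set, the verification that the $M$- and $q(Y_0)$-terms cancel is purely bookkeeping.
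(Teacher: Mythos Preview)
Your argument is essentially identical to the paper's: your $p,p'$ coincide with the paper's $p^{(1)},p^{(2)}$, and the use of Lemma~\ref{LMsbmtgg} together with the evaluations at $Z=X$ and $Z=Y$ is exactly the same, with all $M$- and $q(Y_0)$-terms cancelling as you say.

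One slip to fix: your parenthetical remark that ``only the inequalities `$\le$' in these identities are actually needed'' is backwards. The easy (non-forcing) direction gives $\tilde g_1(q)\le \tilde g(p)+p(A)$ and $g_2(-q)\le g(p')+p'(B)+q(Y_0)$, since restricting the maximization in $\tilde g(p)$, $g(p')$ to sets of the special form can only decrease the value. But to prove the \emph{lower} bound $\tilde g_1(q)+g_2(-q)\ge f(X)+f(Y)$ you need the opposite inequalities $\tilde g_1(q)\ge \tilde g(p)+p(A)$ and $g_2(-q)\ge g(p')+p'(B)+q(Y_0)$, and these are precisely the ones that rely on the large-$M$ forcing (and on $\dom\tilde f_1,\dom f_2\neq\emptyset$ from Lemma~\ref{LMf1f2dom}). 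Since you do carry out the forcing argument and state the identities as equalities, the proof stands; just delete or reverse that remark.
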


\begin{proof}
For a vector $q \in \RR^{Y_{0}}$ we define 
$p^{(1)}, p^{(2)} \in \RR^{N}$ by
\begin{align*} 
p^{(1)}_{i}  &= p^{(2)}_{i}  =
   \left\{  \begin{array}{ll}
    q_{i}          &   (i  \in Y_{0}) ,     \\
   - M     &   (i \in C ),  \\
   + M     &   (i \in N \setminus (X \cup Y)  ) , \\
                     \end{array}  \right.
\quad 
p^{(1)}_{i}  = - p^{(2)}_{i}  =
   \left\{  \begin{array}{ll}
   - M     &   (i \in X_{0} \setminus I ),  \\
   + M     &   (i \in I ),  \\
                     \end{array}  \right.
\end{align*}
where $M$ is a sufficiently large positive number.

The maximizer $Z$ 
of $\tilde g(p)$ in (\ref{gpdef-size}) 
for $p = p^{(1)}$ 
must avoid $I$ and include $(X_{0} \setminus I) \cup C$. 
Hence
$Z =(X_{0} \setminus I) \cup C \cup J$ 
for some $J \subseteq Y_{0}$,
and then
\begin{align*}
& |Z| \leq |X| \iff |J| \leq |I|,
\\ &
p^{(1)}(Z) = - M (|X_{0} \setminus I|+|C|) + q(J).
\end{align*}
Therefore, we have
\begin{align}
\tilde g(p^{(1)}) &=
 \max_{Z \subseteq N} \{ f( Z ) + \beta(Z; |X|)- p^{(1)}(Z) \} 
\notag \\ & =
 \max_{J \subseteq Y_{0}} \{ f( (X_{0} \setminus I) \cup C \cup J) 
+ \beta(J; |I|)- q(J) \} 
 + M (|X_{0} \setminus I|+|C|)
\notag \\ & =
\tilde g_{1}(q) 
 + M (|X_{0} \setminus I|+|C|).
\label{tgp1}
\end{align}
The maximizer $Z$ of 
$g(p)$ in (\ref{gpdef}) 
for $p = p^{(2)}$
must include $I \cup C$ and avoid $X \setminus (I \cup C)$.
Hence
$Z = I \cup C \cup (Y_{0} \setminus J)$ 
for some $J \subseteq Y_{0}$,
and then
\begin{align*}
p^{(2)}(Z) = - M (|I|+|C|) + q(Y_{0} \setminus J).
\end{align*}
Therefore, we have
\begin{align}
 g(p^{(2)}) &=
 \max_{Z \subseteq N} \{ f( Z )  - p^{(2)}(Z) \} 
\notag \\ & =
 \max_{J \subseteq Y_{0}} \{ f( I \cup C \cup (Y_{0} \setminus J) ) + q(J) \} 
 - q(Y_{0}) + M (|I|+|C|)
\notag \\ & =
 g_{2}(-q) 
 - q(Y_{0}) + M (|I|+|C|).
\label{gp2}
\end{align}
By adding (\ref{tgp1}) and (\ref{gp2}) we obtain
\begin{align}
 \tilde g_{1}(q) + g_{2}(-q)
 & =
 \tilde g(p^{(1)}) + g(p^{(2)}) - M (|X|+|C|) + q(Y_{0}).
\label{tg1g2gg}
\end{align}
By Lemma~\ref{LMsbmtgg} we have
\begin{align}
 \tilde g(p^{(1)}) + g(p^{(2)}) \geq 
 \tilde g(p^{(1)} \wedge p^{(2)})
 + g(p^{(1)} \vee p^{(2)}) .
\label{tggsbmp1p2}
\end{align}
Since
\begin{align*} 
& (p^{(1)} \vee p^{(2)})_{i}   = (p^{(1)} \wedge p^{(2)})_{i}  =
   \left\{  \begin{array}{ll}
    q_{i}          &   (i  \in Y_{0}) ,     \\
   - M     &   (i \in C ),  \\
   + M     &   (i \in N \setminus (X \cup Y)  ) , \\
                     \end{array}  \right.
\\
& (p^{(1)} \vee p^{(2)})_{i}   = - (p^{(1)} \wedge p^{(2)})_{i}  =
   + M    \quad   (i \in X_{0}),  \\
\end{align*}
we have
\begin{align} 
\tilde g(p^{(1)} \wedge p^{(2)}) & \geq
f(X) + M |X|,
\label{gpwedgep}
\\
 g(p^{(1)} \vee p^{(2)}) & \geq
f(Y) - q(Y_{0}) + M|C|,
\label{gpveep}
\end{align}
where
(\ref{gpwedgep}) follows from (\ref{gpdef-size}) with $Z=X$
and
(\ref{gpveep}) follows from  (\ref{gpdef}) with $Z=Y$.
The combination of  
(\ref{tg1g2gg}),
(\ref{tggsbmp1p2}),
(\ref{gpwedgep}), and
(\ref{gpveep})
yields the desired inequality
$\tilde g_{1}(q) + g_{2}(-q) \geq f(X) + f(Y)$.
\qed
\end{proof}

We have thus completed the proof of Theorem~\ref{THmultexchmnat-str}.

\begin{remark} \label{RMintvalued} \rm
For an integer-valued function $f: 2^{N} \to \ZZ \cup \{ -\infty \}$,
the above proof  can be made purely discrete.
In particular,
the integrality in the Fenchel-type duality in Theorem \ref{THfenchelmnatsetfn}
allows us to assume  $p$ and $q$ to be integer vectors.
In the proof of Lemma~\ref{LMtggstrongquot} 
we assume $p = q +  \chi_{k}$,
with $\alpha = 1$ in (\ref{p=q+alphak}).
At the end of the proof of Lemma~\ref{LMtggstrongquot}, in the case where
$|U| > m$ and $k \in W \setminus U$,
the inequalities
 (\ref{fWfUfW'fU'2}), (\ref{fW'fW}), and (\ref{fU'fU})
together with integrality
yield at least one of the following:
(i)
$f[-q](W') = f[-q](W)$ and
(ii)
$f[-p](U') = f[-p](U)$.
This is a contradiction to the minimality of 
$|W \setminus U|$, since 
in case (i) we can replace $W$ to $W'$ to obtain
$|W' \setminus U| < |W \setminus U|$,
and 
in case (ii) we can replace $U$ to $U'$ to obtain
$|W\setminus U'| < |W \setminus U|$.
\finbox
\end{remark}

%%%%%%%%%%%%%% SSSSS %%%%%%%%%%%%%%%%%%%%%
\section{The second proof of Theorem~\ref{THmultexchmnat-str}}
\label{SCproof2reduce}

The second proof transforms a given M$^{\natural}$-concave function $f$ 
to an M-concave function (valuated matroid) $\hat{f}$, 
and then applies the only-if part of
 Theorem \ref{THmnatiffmultexch} to $\hat{f}$
in its special case for M-concave functions.

A function $f: 2^{N} \to \RR \cup \{ -\infty \}$ 
with $\dom f \not= \emptyset$
is called an M-concave function \cite{Mdcasiam}
(valuated matroid \cite{DW90,DW92})
if,
for any $X, Y \subseteq N$ and $i \in X \setminus Y$,
it holds that
\begin{align}
f( X) + f( Y )   \leq 
 \max_{j \in Y \setminus X}  \{ f( X - i + j) + f( Y + i -j) \} .
\label{valmatexc}
\end{align}
We can also say that an M-concave function is nothing but 
an M$^{\natural}$-concave function $f$ such that
$\dom f$ consists of equi-cardinal subsets, i.e.,
$|X| = |Y|$ for any $X, Y \in \dom f$.
Therefore,  Theorem~\ref{THmnatiffmultexch} 
in this special case shows that
every M-concave function has the multiple exchange property 
{\rm (M$^{\natural}$-EXC$_{\rm m}$)} 
with the additional condition $|J|=|I|$.

Let $f: 2^{N} \to \RR \cup \{ -\infty \}$
be an  M$^{\natural}$-concave function.
Denote by $r$ and $s$ the maximum and minimum, respectively, of $|X|$ for $X \in \dom f$,
and define
$S = \{ n+1,n+2,\ldots, n+(r-s) \}$
and
$\hat{N} = N \cup S = \{ 1,2,\ldots, \hat n \}$,
where $\hat n =n+(r-s)$.
Define
$\hat{f}: 2^{\hat N} \to \RR \cup \{ -\infty \}$ by
\begin{align} \label{assocMdef} 
\hat{f}(Z)  =
   \left\{  \begin{array}{ll}
    f(Z \cap N)         &   (|Z| = r) ,     \\
   -\infty    &   (\mbox{otherwise}) .  \\
                     \end{array}  \right.
\end{align}
That is, for $X \subseteq N$ and $U \subseteq S$,
we have $\hat{f}(X \cup U) = f(X)$ 
if $|U|=r - |X|$.
By Lemma~\ref{LMassocMfn} below,
$\hat{f}$ is an M-concave function.

Suppose that we are given $X, Y \in \dom f$ and a subset 
$I \subseteq X \setminus Y$.
Take any $U, W \subseteq S$ with
$|U|=r - |X|$ and $|W|=r - |Y|$.
Then
$X \cup U,  Y \cup W  \in \dom \hat f$
and
$I \subseteq (X \cup U) \setminus (Y \cup W)$.
By Theorem \ref{THmnatiffmultexch} for $\hat f$, 
there exists 
$J \subseteq Y \setminus X$
and $V \subseteq W \setminus U$  such that
\begin{align*}
& 
\hat f( X \cup U) +\hat  f( Y \cup W )  
\\ & \leq  
  \hat f( \, ( (X \setminus I) \cup J )  \cup (U  \cup V )  \,  ) +
\hat f( \,  ( (Y \setminus J) \cup I) \cup (W  \setminus V ) \,  ),
\end{align*}
which implies
$f( X) + f( Y )   \leq  f((X \setminus I) \cup J) +f((Y \setminus J) \cup I)$.
Since $\dom \hat f$ consists of equi-cardinal sets,
we must have
$|I| = |J| + |V|$,
which shows
$|I| \geq |J|$.

\begin{lemma} \label{LMassocMfn}
For an M$^{\natural}$-concave function $f$, the function $\hat{f}$ 
in {\rm (\ref{assocMdef})} is M-concave.
\end{lemma}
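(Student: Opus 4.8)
The plan is to verify that $\hat f$ satisfies the M-concave exchange property (\ref{valmatexc}); since $\dom \hat f = \{Z \subseteq \hat N : |Z| = r,\ Z \cap N \in \dom f\}$ is nonempty and consists only of sets of cardinality $r$, this is exactly M-concavity. The inequality (\ref{valmatexc}) is vacuous unless $\hat f(Z), \hat f(Z') > -\infty$, so fix $Z, Z' \in \dom \hat f$ and write $Z = X \cup U$, $Z' = Y \cup W$ with $X = Z \cap N$, $Y = Z' \cap N$, $U = Z \cap S$, $W = Z' \cap S$; then $\hat f(Z) = f(X)$, $\hat f(Z') = f(Y)$, and $|X| + |U| = |Y| + |W| = r$, so that $|X| - |Y| = |W| - |U|$. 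Given $i \in Z \setminus Z'$, I would produce the partner $j \in Z' \setminus Z$ by translating an exchange for $f$ into one for $\hat f$, splitting according to whether $i \in N$ or $i \in S$. In every case the verification that the modified sets lie in $\dom \hat f$ is routine: replacing a real element by a real one, or a dummy by a dummy, preserves the $N$-part; exchanging a real element against a dummy shifts one unit between the $N$-part and the $S$-part; and all four resulting sets retain cardinality $r$, so $\hat f$ evaluates to $f$ of the corresponding $N$-part whenever that $N$-part is in $\dom f$.

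Suppose first $i \in N$, i.e. $i \in X \setminus Y$. If $|X| \le |Y|$, Lemma~\ref{LMmexcP2P3} yields $j \in Y \setminus X$ with $f(X) + f(Y) \le f(X - i + j) + f(Y + i - j)$; here $j \in Z' \setminus Z$, and lifting this to $Z - i + j = (X - i + j) \cup U$ and $Z' + i - j = (Y + i - j) \cup W$ settles (\ref{valmatexc}). If instead $|X| > |Y|$, then $|W| > |U|$, so $W \setminus U \neq \emptyset$; I apply {\rm (M$^{\natural}$-EXC)} to $(X, Y, i)$. In its case (ii) I again get $j \in Y \setminus X$ and lift as above, while in its case (i), $f(X) + f(Y) \le f(X - i) + f(Y + i)$, I use a dummy partner $j \in W \setminus U \subseteq S$: then $Z - i + j$ has $N$-part $X - i \in \dom f$ and $Z' + i - j$ has $N$-part $Y + i \in \dom f$, both of cardinality $r$, which gives the inequality.

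Now suppose $i \in S$, i.e. $i \in U \setminus W$. If $W \setminus U \neq \emptyset$, pick a dummy partner $j \in W \setminus U$; then $Z - i + j$ and $Z' + i - j$ have unchanged $N$-parts $X$ and $Y$, so (\ref{valmatexc}) holds with equality. The only remaining possibility is $W \subseteq U$, in which case $|U| > |W|$ (as $i \in U \setminus W$) forces $|X| < |Y|$, and moreover $Z' \setminus Z = Y \setminus X$, so the partner $j$ is necessarily a real element of $Y \setminus X$. For this configuration I need a one-sided exchange moving a single element into the smaller set: there exists $j \in Y \setminus X$ with $X + j, Y - j \in \dom f$ and $f(X) + f(Y) \le f(X + j) + f(Y - j)$. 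Granting this, $Z - i + j$ has $N$-part $X + j$ and $Z' + i - j$ has $N$-part $Y - j$, both of cardinality $r$ and in $\dom f$, which completes the case.

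The main obstacle is exactly this last configuration: unlike all the others it cannot be reduced to the symmetric single exchange of Lemma~\ref{LMmexcP2P3}, and it requires the \emph{upward} single-element exchange for M$^{\natural}$-concave functions with $|X| < |Y|$. This is a standard companion of Lemma~\ref{LMmexcP2P3}, obtained from the same exchange property {\rm (M$^{\natural}$-EXC$_{\rm p}$)} of \cite[Theorem 4.2]{MS99gp} (it is the translation in which the exchanging ``partner'' is the dummy coordinate, so that no element is removed from $X$); I would state and cite it as a preliminary, in parallel with Lemma~\ref{LMmexcP2P3}, and then assemble the four cases above to conclude that $\hat f$ is M-concave.
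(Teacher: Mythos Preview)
Your proof is correct and follows essentially the same approach as the paper: the same two-way split on whether $i$ is a real or dummy element, the same use of Lemma~\ref{LMmexcP2P3} for the real case with $|X|\le|Y|$, the same dummy-partner trick when $W\setminus U\neq\emptyset$, and the same appeal to the upward single-element exchange when $i\in S$ and $W\subseteq U$. The paper states that last fact exactly as you propose, as a companion lemma (Lemma~\ref{LMmexcP1}) derived from {\rm (M$^{\natural}$-EXC$_{\rm p}$)} in \cite[Theorem~4.2]{MS99gp}.
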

\begin{proof}
Let $X, Y \in \dom f$ and  $U, W \subseteq S$ with
$|U|=r - |X|$ and $|W|=r - |Y|$.
The exchange property for $\hat{f}$ amounts to the following: 
\begin{itemize}
\item
For any $i \in X \setminus Y$
there exists  
$j \in Y \setminus X$ with (\ref{assocM11})
or
$j \in W \setminus U$ with (\ref{assocM12}),
where
\begin{align}
&  \hat f( X \cup U) +\hat  f( Y \cup W )  
\leq   \hat f( (X - i + j )  \cup U  ) + \hat f( (Y + i -j) \cup W  ),
\label{assocM11}
\\
&  \hat f( X \cup U) +\hat  f( Y \cup W )  
\leq   \hat f( (X - i)  \cup (U +j)  ) + \hat f( (Y + i) \cup (W-j)  ).
\label{assocM12}
\end{align}

\item
For any $i \in U \setminus W$
there exists  
$j \in Y \setminus X$ with (\ref{assocM21})
or
$j \in W \setminus U$ with (\ref{assocM22}),
where
\begin{align}
&  \hat f( X \cup U) +\hat  f( Y \cup W )  
\leq   \hat f( (X + j )  \cup (U -i)  ) + \hat f( (Y -j) \cup (W+i)  ),
\label{assocM21}
\\
&  \hat f( X \cup U) +\hat  f( Y \cup W )  
\leq   \hat f( X  \cup (U -i +j)  ) + \hat f( Y  \cup (W+i-j)  ).
\label{assocM22}
\end{align}
\end{itemize}
The exchange properties above can be shown as follows.
For any $i \in X \setminus Y$.
we have (\ref{mconcav1}) or (\ref{mconcav2}).
In case of (\ref{mconcav2}) we obtain (\ref{assocM11}).
In case of (\ref{mconcav1}) we obtain (\ref{assocM12})
for any $j \in W \setminus U$, if $W \setminus U$ is nonempty.
If $W \setminus U$ is empty, then $|X| \leq |Y|$ and
we have (\ref{assocM11}) by Lemma \ref{LMmexcP2P3}.
Next, take any $i \in U \setminus W$.
If $W \setminus U$ is nonempty, (\ref{assocM22}) holds 
for any $j \in W \setminus U$.
If $W \setminus U$ is empty, we have
$|U| > |W|$ and hence $|X| < |Y|$.
Then Lemma \ref{LMmexcP1} below shows (\ref{assocM21}).
\qed
\end{proof}

\begin{lemma} \label{LMmexcP1}
If $f$ satisfies {\rm (M$^{\natural}$-EXC)},
then, for any $X, Y$ with $|X| < |Y|$,
there exists $j \in Y \setminus X$ such that
$f( X) + f( Y )   \leq  f( X  + j) + f( Y -j)$.
\end{lemma}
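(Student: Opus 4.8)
The plan is to derive the assertion from the exchange property {\rm (M$^{\natural}$-EXC)} read with the roles of $X$ and $Y$ interchanged, in the same spirit in which its companion Lemma~\ref{LMmexcP2P3} (the swap-type exchange) is obtained. Applying {\rm (M$^{\natural}$-EXC)} to the pair $(Y,X)$ and an element $i \in Y \setminus X$ yields, for every such $i$,
\[
 f(X) + f(Y) \leq \max\Bigl[\, f(X+i) + f(Y-i), \ \max_{j \in X \setminus Y} \{ f(X+i-j) + f(Y-i+j) \} \,\Bigr].
\]
First I would dispose of the degenerate cases: if $f(X) = -\infty$ or $f(Y) = -\infty$ the inequality is trivial, and $Y \setminus X \neq \emptyset$ because $|X| < |Y|$, so one may assume $X, Y \in \dom f$.

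The base case $X \subseteq Y$ is then immediate. In this case $X \setminus Y = \emptyset$, so the inner maximum above is taken over the empty set and equals $-\infty$; hence the first alternative must hold, that is $f(X) + f(Y) \leq f(X+i) + f(Y-i)$, and the chosen $i \in Y \setminus X$ serves as the required $j$. This step is fully self-contained and uses nothing beyond the axiom itself.

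The substantive case is $X \setminus Y \neq \emptyset$, and this is where the main difficulty lies. The assertion is precisely the ``addition-type'' exchange axiom that, together with the swap-type axiom of Lemma~\ref{LMmexcP2P3}, constitutes the exchange-property characterization of M$^{\natural}$-concavity; I would therefore obtain it as the direct translation of the corresponding property~(i) of {\rm (M$^{\natural}$-EXC$_{\rm p}$)} in \cite[Theorem~4.2]{MS99gp}, paralleling the proof of Lemma~\ref{LMmexcP2P3}. The reason a short self-contained argument is elusive is instructive, and I expect it to be the genuine obstacle. A naive induction on $|X \setminus Y|$ applying {\rm (M$^{\natural}$-EXC)} to some $i \in X \setminus Y$ does not close: in the swap alternative one stays at the same value of $|X \setminus Y|$, while in the pure-deletion alternative $f(X)+f(Y) \leq f(X-i)+f(Y+i)$ the inductive addition hypothesis applied to $(X-i, Y+i)$ returns an element of $(Y \setminus X) \cup \{i\}$, so if that element is $i$ one recovers only the identity $f(X)+f(Y) = f(X-i)+f(Y+i)$, and otherwise one obtains a swap term for $(X,Y)$ rather than the required addition term. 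Even a value-maximizing minimal-counterexample argument stalls, since the reconfiguration forced by the swap alternative preserves all relevant cardinalities and the value, giving no monovariant. Reconciling the two alternatives needs the full interplay of the addition and swap axioms; note in particular that the valuated-matroid lift of Section~\ref{SCproof2reduce} is unavailable here, as Lemma~\ref{LMassocMfn} itself relies on the present lemma. This is why invoking the established equivalence of exchange axioms is the clean route.
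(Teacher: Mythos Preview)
Your proposal is correct and ultimately takes the same approach as the paper: both obtain the lemma by citing the corresponding property~(i) of (M$^{\natural}$-EXC$_{\rm p}$) in \cite[Theorem~4.2]{MS99gp}. Your added self-contained treatment of the case $X \subseteq Y$ and your discussion of why a direct elementary argument stalls are useful commentary, but the essential proof step is identical to the paper's one-line citation.
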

\begin{proof}
This is a direct translation of the exchange property (i)
of  (M$^{\natural}$-EXC$_{\rm p}$) 
given in  \cite[Theorem 4.2]{MS99gp}
for M$^{\natural}$-convex function on $\mathbb{Z}\sp{N}$.
\qed
\end{proof}

\section*{Acknowledgments}
The author thanks Akiyoshi Shioura
for suggesting a simplification in the proof
of Section \ref{SCproof1direct}.
He is also thankful to Kenjiro Takazawa and Akihisa Tamura for helpful comments.
This work is supported by The Mitsubishi Foundation, 
CREST, JST, Grant Number JPMJCR14D2, Japan, and 
KAKENHI Grant Number 26280004.

\end{document}